\newcommand{\tpmod}[1]{{\@displayfalse\pmod{#1}}}
\newtheorem{thm}{Theorem}[section]
\newtheorem{lemma}[thm]{Lemma}
\theoremstyle{remark}
    \newtheorem*{rems}{{\bf Remarks}}
\theoremstyle{definition}
\newtheorem{rem}[thm]{Remark}
\theoremstyle{THM}
\newcommand{\abs}[1]{\left|{#1}\right|}
\def\FF {{\mathcal F}}
\def\Z {{\mathbb Z}}
\def\QQ {{\mathcal Q}}
\def\Q {{\mathbb Q}}
\def\D {{\mathcal D}}
\def\F {{\mathbb F}}
\def\D {{\mathcal D}}
\def\Z {{\mathbb Z}}
\def\Q {{\mathbb Q}}
\def\Gal{{\mbox{{\rm{Gal}}}}}
\def\red#1 {\textcolor{red}{#1 }}
\def\blue#1 {\textcolor{blue}{#1 }}
\numberwithin{equation}{section}
\def\Z {{\mathbb Z}}
\newcommand{\Mod}[1]{\ (\mathrm{mod}\enspace #1)}
\newcommand{\mmod}[1]{\ \mathrm{mod}\enspace #1}
\begin{document}

\title[Monogenic Even Cyclic Sextic Polynomials]{Monogenic Even Cyclic Sextic Polynomials}

%\author{Joshua Harrington}
%\address{Department of Mathematics, Cedar Crest College, Allentown, Pennsylvania, USA}
%\email[Joshua Harrington]{Joshua.Harrington@cedarcrest.edu}

\author{Lenny Jones}
\address{Professor Emeritus, Department of Mathematics, Shippensburg University, Shippensburg, Pennsylvania 17257, USA}
\email[Lenny~Jones]{doctorlennyjones@gmail.com}

%\author{Daniel White}
%\address{Department of Mathematics, Bryn Mawr College, Bryn Mawr, Pennsylvania 19010-2899, USA}
%\email[Daniel~White]{dfwhite@brynmawr.edu}
\date{\today}

\begin{abstract}
Suppose that $f(x)\in \Z[x]$ is monic and irreducible over $\Q$ of degree $N$. We say that $f(x)$ is \emph{monogenic} if $\{1,\theta,\theta^2,\ldots ,\theta^{N-1}\}$ is a basis for the ring of integers of $\Q(\theta)$, where $f(\theta)=0$, and we say $f(x)$ is \emph{cyclic} if the Galois group of $f(x)$ over $\Q$ is isomorphic to the cyclic group of order $N$. In this note, we prove that there do not exist any monogenic even cyclic sextic binomials or trinomials. Although the complete story on monogenic even cyclic sextic quadrinomials remains somewhat of a mystery, we nevertheless determine that the union of three particular infinite sets of cyclic sextic quadrinomials contains exactly four quadrinomials that are monogenic with distinct splitting fields. We also show that the situation can be quite different for quadrinomials whose Galois group is not cyclic.
\end{abstract}

\subjclass[2020]{Primary 11R16, 11R32}% Secondary 11R32}
\keywords{monogenic, even sextic, cyclic, quadrinomial, Galois}

\maketitle
\section{Introduction}\label{Section:Intro}
Suppose that $f(x)\in \Z[x]$ is monic and irreducible over $\Q$, with $\deg(f)=N$. We say that $f(x)$ is \emph{monogenic} if $\{1,\theta,\theta^2,\ldots ,\theta^{N-1}\}$ is a basis for the ring of integers of $\Q(\theta)$, where $f(\theta)=0$, and we say $f(x)$ is \emph{cyclic} if the Galois group of $f(x)$ over $\Q$, denoted here as $\Gal(f)$, is isomorphic to $C_N$, the cyclic group of order $N$. It was shown recently that there exist only three distinct monogenic even cyclic quartic trinomials \cite{JonesBAMSEQT}. In this note, we investigate the existence of monogenic cyclic even sextic polynomials. We prove that there do not exist any monogenic even cyclic sextic binomials or trinomials by proving that no even cyclic binomials or trinomials exist. Although the complete story on monogenic even cyclic sextic quadrinomials remains somewhat of a mystery, we nevertheless determine that the union of three particular infinite sets of cyclic sextic quadrinomials contains exactly four quadrinomials that are monogenic with distinct splitting fields. 
More precisely, our main theorem is:
\begin{thm}\label{Thm:Main} Let $a,b,c\in \Z$.
\begin{enumerate}
  \item \label{Main I:1} If $f(x)=x^6+ax^4+bx^2+c$ is irreducible over $\Q$ with $ab=0$, then $\Gal(f)\not \simeq C_6$. Consequently, there exist no monogenic even cyclic sextic binomials or trinomials.
  \item \label{Main I:2} Let
  \[f(x)=x^2(x^2+a)^2+b=x^6+2ax^4+a^2x^2+b\] be irreducible over $\Q$. Then,
  $\Gal(f)\simeq C_6$ if and only if $4a^3b-27b^2$ is a square. Moreover, if $f(x)\in \FF_1$, where $\FF_1$ is the infinite set
    \begin{equation}\label{F1}
    \left\{f(x) :\ \mbox{$f(x)$ is irreducible over $\Q$ and $4a^3b-27b^2$ is a square.}\right\},
    \end{equation} then $f(x)$ is monogenic if and only if
    \[f(x)\in \QQ_1:=\{x^6-6x^4+9x^2-3, \ x^6+6x^4+9x^2+1\}.\] %Furthermore, the two monogenic even cyclic quadrinomials in $\QQ_1$ are distinct.
  \item \label{Main I:3} Let
  \[f(x)=x^6+a(bx^2+1)^2=x^6+ab^2x^4+2abx^2+a\] be irreducible over $\Q$. Then,
  $\Gal(f)\simeq C_6$ if and only if $4ab^3-27$ is a square. Moreover, if $f(x)\in \FF_2$, where $\FF_2$ is the infinite set
    \begin{equation}\label{F2}
    \left\{f(x) :\ \mbox{$f(x)$ is irreducible over $\Q$ and $4ab^3-27$ is a square.}\right\},
    \end{equation} then $f(x)$ is monogenic if and only if
    \[f(x)\in \QQ_2:=\{x^6-7x^4+14x^2-7, \ x^6+9x^4+6x^2+1\}.\]
   \item \label{Main I:4} For any integer $n$, define $\FF_3:=\{f_n(x): n\in \Z\}$, where
   \[f_n(x):=x^6+(n^2+5)x^4+(n^2+2n+6)x^2+1.\] Then $f_n(x)$ is irreducible over $\Q$ and $\Gal(f_n)\simeq C_6$. Moreover, $f_n(x)$ is monogenic if and only if
   \[f_n(x)\in \QQ_3=\{f_{-2}(x),\ f_{-1}(x), \ f_0(x),\ f_1(x)\}.\] 
   \item \label{Main I:5} Furthermore, there are exactly four distinct quadrinomials in $\QQ_1\cup \QQ_2\cup \QQ_3$, and they are:
  \begin{gather*}
  x^6-6x^4+9x^2-3,\quad x^6+6x^4+9x^2+1,\\
  x^6-7x^4+14x^2-7, \quad x^6+5x^4+6x^2+1.
  \end{gather*} Here the term ``distinct" means that the qudarinomials have different splitting fields. 
\end{enumerate}
\end{thm}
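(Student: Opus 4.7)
All polynomials in parts (1)--(4) are of the form $f(x) = G(x^2)$ for a cubic $G(t) \in \Z[t]$ with $G(0)$ the constant term of $f$. I would first prove a general criterion: if $f$ is irreducible over $\Q$, then $\Gal(f) \cong C_6$ iff (a) $G$ is cyclic cubic (equivalently, $\mathrm{disc}(G)$ is a rational square), and (b) the ratios $\gamma_i/\gamma_j$ of roots of $G$ all lie in $(M^\times)^2$, where $M = \Q(\gamma_1)$. The splitting field of $f$ is $L = M(\sqrt{\gamma_1},\sqrt{\gamma_2},\sqrt{\gamma_3})$, and condition (b) is exactly what makes $[L:M] = 2$ (hence $[L:\Q] = 6$); that $\Gal(L/\Q) \cong C_6$ rather than $S_3$ follows from the normality of the cyclic cubic $M$ inside $L$.

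Part (1) then follows from this criterion by a sign argument: condition (a) makes $M$ a totally real cyclic cubic field, so squares in $M$ are non-negative, and condition (b) forces all three roots $\gamma_i$ of $G$ to share a common sign. For the binomial $G = t^3+c$, we have $\mathrm{disc}(G) = -27c^2 \le 0$, ruling out (a). For the trinomial $G = t^3+bt+c$ (when $a=0$), Vieta gives $\sum \gamma_i = 0$, forbidding a common sign. For $G = t^3+at^2+c$ (when $b=0$), Vieta gives $\sum_{i<j}\gamma_i\gamma_j = 0$, again forbidding a common sign (since otherwise the sum would be strictly positive). These contradictions finish part (1).

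For parts (2) and (3), I exploit the identities $f(x) = (x^3+ax)^2 + b$ and $f(x) = x^6 + a(bx^2+1)^2$, which respectively express $f$ as a sum of squares and show that a root $\alpha$ satisfies $\alpha \in M \cdot \Q(\sqrt{-b})$ (resp.\ $M \cdot \Q(\sqrt{-a})$). The splitting field is thus $M \cdot E$, and condition (b) collapses to $E$ being a non-trivial quadratic, automatic when $f$ is irreducible. Computing $\mathrm{disc}(G)$ for $G(t) = t^3+2at^2+a^2t+b$ and $G(t) = t^3+ab^2t^2+2abt+a$ yields $4a^3b - 27b^2$ and $a^2(4ab^3 - 27)$, respectively, giving the stated characterizations. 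For part (4), the discriminant identity $\mathrm{disc}(G_n) = [(n^2+n+7)(n^2+n-1)]^2$ (verified by polynomial expansion) yields (a). Condition (b) is witnessed by the explicit factorization
\[
u^6 - (n^2+5)u^4 + (n^2+2n+6)u^2 - 1 = \bigl(u^3+(n-1)u^2-(n+2)u+1\bigr)\bigl(u^3-(n-1)u^2-(n+2)u-1\bigr),
\]
which shows that $\sqrt{-\gamma_i}$ lies in the cubic subfield $M_n$ (each factor is an irreducible cubic, verified via the rational root theorem, and generates $M_n$). Irreducibility of $f_n$ for every integer $n$ then follows from $G_n$ being irreducible (as $G_n(\pm 1) \ne 0$) and the $\gamma_i$ being negative reals (from Descartes' rule applied to $G_n$ and $G_n(-t)$), hence not squares in the real field $M_n$.

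For monogenicity in parts (2)--(4), I compute $\mathrm{disc}(f) = -64 \, G(0) \, \mathrm{disc}(G)^2$ and compare with the field discriminant $d_K$ of the cyclic sextic $K$, obtainable from the conductor-discriminant formula applied to $K = M \cdot E$. Monogenicity forces $\mathrm{disc}(f) = d_K$, i.e., the index $[O_K:\Z[\theta]] = 1$; this leads to explicit divisibility conditions at each prime dividing $\mathrm{disc}(f)/d_K$, which via Dedekind's criterion reduces to a finite set of Diophantine constraints on $(a,b)$ or $n$ with precisely the solutions $\QQ_1, \QQ_2, \QQ_3$. I expect the main obstacle to be the delicate local analysis at the primes $2$ and $3$. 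Finally, for part (5), the splitting field $K = M \cdot E$ is determined by the pair (cubic conductor of $M$, squarefree discriminant of $E$); tabulating these for each polynomial in $\QQ_1 \cup \QQ_2 \cup \QQ_3$ produces exactly four distinct pairs, yielding the four stated representatives with distinct splitting fields.
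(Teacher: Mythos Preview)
Your approach to part~(1) is correct and genuinely cleaner than the paper's. The paper invokes the Awtrey--Jakes criterion (Lemma~\ref{Lem:AJ}) together with a reducibility lemma for the auxiliary sextic $h(x)$, then runs through a case analysis involving substitutions, modular arithmetic mod~32, and explicit factorizations to reach contradictions. Your sign argument bypasses all of this: once one knows that a cyclic cubic field is totally real and that condition~(b) forces the roots $\gamma_i$ of $G$ to share a sign, Vieta's relations $\sum\gamma_i=0$ (when $a=0$) and $\sum_{i<j}\gamma_i\gamma_j=0$ (when $b=0$) give immediate contradictions. This is both shorter and more conceptual.

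For the Galois-group portions of parts~(2)--(4) your arguments are also correct and somewhat more streamlined than the paper's (which verifies the three Awtrey--Jakes conditions directly, including explicit factorizations of $h(x)$). Your irreducibility argument for $f_n$ in part~(4) via Descartes' rule and total-realness is again nicer than the paper's Maple-assisted elimination of possible quadratic and cubic factors.

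However, the monogenicity claims in parts~(2)--(4) are the technical heart of the theorem, and here your proposal is only a sketch. You write that Dedekind's criterion ``reduces to a finite set of Diophantine constraints \ldots\ with precisely the solutions $\QQ_1,\QQ_2,\QQ_3$'' and flag the primes $2$ and $3$ as the main obstacle, but you do not carry out this reduction. The paper devotes most of its length to exactly this: it imports tailored Dedekind-type criteria (Theorems~\ref{Thm:JLY} and~\ref{Thm:JKK}) for the shapes $x^2(x^2+a)^2+b$ and $x^6+a(bx^2+1)^2$, then performs a careful prime-by-prime analysis showing, for instance, that $2\nmid ab$, that $b\equiv 1\pmod 4$, that $3\,\|\,a$, and ultimately that the Diophantine equations $(a/b)(a/3)^2=1$ or $4(a/3b)(a/3)^2-1=3^{2k+1}$ pin down $(a,b)$. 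For part~(4) the paper shows directly that $g_n$ fails Dedekind at any prime dividing $n^2+n-1$, forcing $n\in\{-2,-1,0,1\}$. Your conductor--discriminant route to $d_K$ is a legitimate alternative, but to complete the proof you would still need to identify the cubic conductors (itself a Dedekind-type computation on $G$) and then run a comparable local analysis. As written, the monogenicity conclusions are asserted rather than proved.

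Your plan for part~(5), tabulating the pairs (cubic conductor, quadratic discriminant of $E$), is in fact more rigorous than the paper's, which simply compares polynomial discriminants and implicitly uses that equal discriminants imply equal fields here.
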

\begin{rems}\label{Rem:distinct}
As far as we can determine, the $C_6$-families $\FF_1$, $\FF_2$ and $\FF_3$ given in Theorem \ref{Thm:Main} are new and do not appear in the current literature.
\end{rems}

Several authors have contributed to the literature concerning the Galois groups of sextic polynomials \cite{AJ,AL,BS,HJMS,S1,S2}. In \cite{AJ}, Awtrey and Jakes gave a classification of the Galois groups of even sextic polynomials, while in \cite{AL}, the focus was on reciprocal polynomials. A somewhat more general analysis was conducted in \cite{S1,S2}, which predated the work in \cite{AJ}. Although the focus in \cite{HJMS} was on power-compositional sextics, there is no overlap with the current paper. A result along the same lines as Theorem \ref{Thm:Main} was given by Bremner and Spearman in \cite{BS}, where it was shown that, up to scaling, $x^6+133x+209$ is the only cyclic sextic of the form $x^6+Ax+B$.
As far as incorporating the notion of monogenicity into the mix, %Galois group investigations,
the only example we could find is \cite{LSW}, where the authors focus on the relationship between the monogenicity of a sextic field generated by $f(x)$ with $\abs{f(0)}=1$ and the monogenicity of a cubic subfield.

In section \ref{Sec:noncyclic}, we show that the situation when $\Gal(f)\not \simeq C_6$ can be quite different than what occurs in Theorem \ref{Thm:Main}. In particular, we construct an infinite collection of non-distinct monogenic even sextic quadrinomials that have Galois group isomorphic to the alternating group $A_4$. %This family does not seem to appear in the current literature.

  % Computer computations in this article were done using either MAGMA, Maple or Sage.
\section{Preliminaries}\label{Section:Prelim}
Suppose that $f(x)\in \Z[x]$ is monic and irreducible over $\Q$. Let $K=\Q(\theta)$, where $f(\theta)=0$, and let $\Z_K$ denote the ring of integers of $K$. Then, we have \cite{Cohen}
\begin{equation} \label{Eq:Dis-Dis}
\Delta(f)=\left[\Z_K:\Z[\theta]\right]^2\Delta(K),
\end{equation}
where $\Delta(f)$ and $\Delta(K)$ denote the discriminants over $\Q$, respectively, of $f(x)$ and the number field $K$.
Thus, we have the following theorem from \eqref{Eq:Dis-Dis}.
\begin{thm}\label{Thm:mono}
The polynomial $f(x)$ is monogenic if and only if
  $\Delta(f)=\Delta(K)$, or equivalently, $\Z_K=\Z[\theta]$.
\end{thm}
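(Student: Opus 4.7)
The plan is to derive both stated equivalences directly from the index-discriminant relation \eqref{Eq:Dis-Dis}, together with a quick unpacking of the definition of monogenicity given in Section \ref{Section:Intro}. No further number-theoretic input is needed, so the argument will be very short.

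First I would handle the equivalence $\Delta(f)=\Delta(K) \iff \Z_K=\Z[\theta]$. Since $K$ is a number field of degree $N$, its discriminant $\Delta(K)$ is a nonzero integer, and $\Z[\theta]$ is a free $\Z$-submodule of $\Z_K$ of full rank $N$, so the index $[\Z_K:\Z[\theta]]$ is a positive integer. Dividing \eqref{Eq:Dis-Dis} through by $\Delta(K)$, the equality $\Delta(f)=\Delta(K)$ becomes $[\Z_K:\Z[\theta]]^2=1$, which holds precisely when $[\Z_K:\Z[\theta]]=1$, i.e., when $\Z[\theta]=\Z_K$.

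Next I would handle the equivalence ``$f$ is monogenic $\iff \Z_K=\Z[\theta]$''. Because $\theta$ has minimal polynomial $f$ of degree $N$ over $\Q$, the set $\{1,\theta,\theta^2,\ldots,\theta^{N-1}\}$ is a $\Q$-basis for $K$ and, by the very construction of $\Z[\theta]$, is a $\Z$-basis for $\Z[\theta]$. By the definition of monogenicity recalled in the introduction, $f$ is monogenic exactly when this same set is a $\Z$-basis for $\Z_K$, which is equivalent to $\Z[\theta]=\Z_K$. Chaining this with the previous paragraph yields the full equivalence claimed.

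The only real ``obstacle'' is confirming that the index $[\Z_K:\Z[\theta]]$ is a finite positive integer (so squaring it is meaningful and nonzero) and that $\Delta(K)\neq 0$; both are standard facts about orders in number fields and are already implicit in writing down \eqref{Eq:Dis-Dis}. Once these are acknowledged, the theorem is a direct definitional consequence of \eqref{Eq:Dis-Dis}, and there is nothing else to check.
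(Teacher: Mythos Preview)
Your proposal is correct and follows exactly the approach the paper takes: the paper simply asserts that Theorem~\ref{Thm:mono} follows from \eqref{Eq:Dis-Dis}, and your argument is precisely the short unpacking of that assertion. There is nothing to add or modify.
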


%\begin{comment}

The first lemma, which follows from \cite{AJ}, will be useful in the proof of Theorem \ref{Thm:Main} and also in the proof of Theorem \ref{Thm:A4} in Section \ref{Sec:noncyclic}.
\begin{lemma}\label{Lem:AJ}
 Let $f(x)=x^6+ax^4+bx^2+c\in \Z[x]$ be irreducible over $\Q$. Let $g(x)=x^3+ax^2+bx+c$ and $h(x)=x^6-bx^4+acx^2-c^2$. Then
  \begin{enumerate}
  \item \label{C6} $\Gal(f)\simeq C_6$ if and only if  all of the following conditions hold:
 \begin{enumerate}
 \item $-c$ is not a square in $\Z$,
 \item $\Delta(g)$is a square in $\Z$, 
  \item $h(x)$ is reducible over $\Q$.
  \end{enumerate}
  \item \label{A4} $\Gal(f)\simeq A_4$ if and only if  all of the following conditions hold:
 \begin{enumerate}
 \item $-c$ is a square in $\Z$,
 \item $\Delta(g)$ is a square in $\Z$, 
  \item $h(x)$ is irreducible over $\Q$.
  \end{enumerate}
  \end{enumerate}
  \end{lemma}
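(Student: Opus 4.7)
The natural strategy is to reduce everything to the underlying cubic $g(x)=x^3+ax^2+bx+c$, since $f(x)=g(x^2)$. First I would verify that irreducibility of $f$ forces irreducibility of $g$: any factorization $g=pq$ over $\Q$ would give $f(x)=p(x^2)q(x^2)$, contradicting the hypothesis on $f$. Letting $\alpha_1,\alpha_2,\alpha_3\in\overline{\Q}$ be the roots of $g$, I choose $\beta_i$ with $\beta_i^2=\alpha_i$ and set $L=\Q(\alpha_1,\alpha_2,\alpha_3)$, $M=L(\beta_1,\beta_2,\beta_3)$. Then $L$ and $M$ are the splitting fields of $g$ and $f$, and $\Gal(f)$ embeds in the wreath product $C_2\wr S_3$ via its action on $\{\pm\beta_1,\pm\beta_2,\pm\beta_3\}$, with restriction to $L$ yielding a surjection $\Gal(f)\twoheadrightarrow\Gal(g)$.

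Condition (b) then comes for free in both parts: since $C_6$ and $A_4$ each have $C_3$ as their unique order-$3$ quotient and neither admits $S_3$ as a quotient, the surjection above forces $\Gal(g)\simeq C_3$, which for an irreducible cubic is equivalent to $\Delta(g)$ being a square in $\Z$. For condition (a), I would use the product $\beta_1\beta_2\beta_3$, which squares to $\alpha_1\alpha_2\alpha_3=-c$; an element $(s_1,s_2,s_3;\sigma)\in C_2\wr S_3$ fixes this product precisely when $s_1s_2s_3=1$. A direct inspection shows that the transitive copy of $A_4$ in $C_2\wr S_3$ having $C_3$-quotient sits entirely inside $\ker(s_1s_2s_3)$, whereas the transitive copy of $C_6$ with the same quotient does not (its order-$2$ generator must be the total sign-flip $(-1,-1,-1;1)$). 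Hence $-c$ is a square in $\Z$ in the $A_4$ case and a nonsquare in the $C_6$ case.

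Condition (c) is the substantive part, and I expect it to be the main obstacle. The roots of
\[h(x)=\prod_{i<j}(x^2-\alpha_i\alpha_j)=x^6-bx^4+acx^2-c^2\]
are $\pm\beta_i\beta_j$, and the plan is to compute $\Gal(f)$-orbits on these six roots. In the $C_6$ case the central element $(-1,-1,-1;1)$ fixes every $\beta_i\beta_j$, so each orbit has size $3$, and $h$ visibly splits as a product of two rational cubics. In the $A_4$ case the $V_4$ part produces sign-flips on the $\beta_i\beta_j$, which combined with the $C_3$-cycling yield a single orbit of size $6$, forcing $h$ to be irreducible over $\Q$. The delicate step is the converse direction: verifying that the three conditions together actually pin down $\Gal(f)$, i.e., that no other transitive subgroup of $C_2\wr S_3$ (such as $S_3$, $D_6$, $C_2\times S_3$, $C_2\wr C_3$, $S_4$, or $C_2\wr S_3$ itself) can simultaneously satisfy (a)--(c). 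This sufficiency is precisely what the systematic resolvent-based classification of Awtrey--Jakes in \cite{AJ} provides, and invoking their tables closes both biconditionals.
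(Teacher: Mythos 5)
Your necessity arguments are sound and go beyond what the paper does: the paper offers no proof of this lemma at all, simply recording it as a consequence of the Awtrey--Jakes classification \cite{AJ}, whereas you reconstruct why conditions (a)--(c) must hold from the embedding $\Gal(f)\hookrightarrow C_2\wr S_3$ induced by $f(x)=g(x^2)$. Your computations check out: irreducibility of $f$ forces irreducibility of $g$; the quotient map onto $\Gal(g)$ rules out $S_3$ for both $C_6$ and $A_4$ (for $A_4$ one can say it even more cleanly -- $A_4$ has no index-$2$ subgroup, so the sign-product homomorphism $(s;\sigma)\mapsto s_1s_2s_3$ is automatically trivial on it, giving $-c=(\beta_1\beta_2\beta_3)^2$ a square); for a transitive $C_6$ with $C_3$-image the cube of a generator is indeed the total flip, so $-c$ is not a square; and $h(x)=\prod_{i<j}(x^2-\alpha_i\alpha_j)$ is correct, with the total flip acting trivially on the $\beta_i\beta_j$ in the $C_6$ case (orbits of size at most $3$, hence $h$ reducible) and the even sign-patterns of $V_4$ producing a single orbit of size $6$ in the $A_4$ case (hence $h$ irreducible, after noting $h$ is separable since the $\alpha_i$ are distinct and $c\neq 0$). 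The one substantive point is the one you yourself flag: the converse direction -- that (a)--(c) exclude every other transitive subgroup of $C_2\wr S_3$ -- is not carried out, and you close it by citing \cite{AJ}, which is exactly the paper's own (and only) justification. So your route buys a self-contained explanation of the forward implications and a correct identification of where the real work lies, but it does not remove the dependence on \cite{AJ}; the paper's approach is shorter because it delegates the entire biconditional to that reference.
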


The next lemma follows from \cite[Theorem 3.3]{HJMS}.
\begin{lemma}\label{Lem:HJMS} Let $A,B,C\in \Z$ and let $G(x)=x^3+Ax^2+Bx-C^2$ be irreducible over $\Q$.  If
\[F(x):=G\left(x^2\right)=x^6+Ax^4+Bx^2-C^2\] is reducible over $\Q$, then
 \begin{equation}\label{Eq:HJMAconditions}
 A=2n-m^2 \quad \mbox{and} \quad B=n^2-2mc
 \end{equation}
 for some $m,n\in \Z$.
\end{lemma}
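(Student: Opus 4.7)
\textbf{Proof plan for Lemma \ref{Lem:HJMS}.}

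The strategy is to use the assumption that $G(x)$ is irreducible to constrain the possible shapes of a nontrivial factorization of $F(x)=G(x^2)$ over $\Q$, and then exploit the fact that $F(x)$ is an even polynomial in $x$. I would first argue that the only possible nontrivial factorization of $F(x)$ over $\Q$ is as a product of two monic cubics. Indeed, any linear factor $x-\beta\in\Q[x]$ of $F(x)$ would mean $\beta^2$ is a rational root of $G(x)$, contradicting irreducibility. Any monic quadratic factor of $F(x)$ over $\Q$ has its two roots drawn from $\{\pm\sqrt{\alpha_1},\pm\sqrt{\alpha_2},\pm\sqrt{\alpha_3}\}$, where the $\alpha_i$ are the (non-rational) roots of $G$; checking the three possible cases (a matched $\pm$-pair, or a cross pair like $\sqrt{\alpha_i}\pm\sqrt{\alpha_j}$) forces either some $\alpha_i\in\Q$ or the symmetric functions $\sqrt{\alpha_i\alpha_j}$ and $\alpha_i+\alpha_j$ to be rational, which in turn makes a root of $G$ rational via $\alpha_1+\alpha_2+\alpha_3=-A$. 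Hence no quadratic (and by complementarity no quartic, and no quintic) factor exists, so a nontrivial factorization must be of the form $F(x)=P(x)Q(x)$ with $P,Q$ monic of degree $3$.

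Next I would use the evenness of $F$. Since $F(-x)=F(x)$, writing $\tilde P(x):=-P(-x)$ and $\tilde Q(x):=-Q(-x)$ (so that both are monic cubics) gives $\tilde P(x)\tilde Q(x)=F(x)=P(x)Q(x)$. Unique factorization in $\Q[x]$ yields either $\tilde P=P$ or $\tilde P=Q$. The first alternative says $P(x)=-P(-x)$, so $P$ has only odd-degree terms; but then $P(0)=0$ forces $F(0)=-C^2=0$, whence $C=0$ and $G(x)=x(x^2+Ax+B)$ is reducible, contradicting our hypothesis. So we must have $Q(x)=-P(-x)$.

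Now I would write $P(x)=x^3+mx^2+nx+c$ for some integers $m,n,c$ (integrality follows from Gauss's lemma applied to the monic factorization of $F\in\Z[x]$), so $Q(x)=x^3-mx^2+nx-c$. A direct expansion gives
\[
P(x)Q(x)=(x^3+nx)^2-(mx^2+c)^2=x^6+(2n-m^2)x^4+(n^2-2mc)x^2-c^2.
\]
Comparing with $F(x)=x^6+Ax^4+Bx^2-C^2$ forces $c^2=C^2$ (so $c=\pm C$) together with $A=2n-m^2$ and $B=n^2-2mc$. Absorbing the sign of $c$ into $m$ (replace $m$ by $-m$ if $c=-C$) preserves $A=2n-m^2$ and rewrites $B$ as $n^2-2mC$, which is exactly the conclusion of the lemma.

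The main obstacle is the step that rules out a quadratic factor over $\Q$: one has to do the small case analysis on the six square-root roots carefully and use that the elementary symmetric functions of $\alpha_1,\alpha_2,\alpha_3$ lie in $\Q$ to force $G$ to acquire a rational root whenever any ``cross'' quadratic factor exists. The remainder (ruling out degree $1$ and using evenness to pin down $Q=-P(-\cdot)$) is short, and the final expansion is purely mechanical.
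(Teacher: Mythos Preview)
Your argument is correct. The paper itself does not prove this lemma; it simply records that the statement follows from \cite[Theorem 3.3]{HJMS}, so you are supplying a self-contained proof where the paper defers to the literature.

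A couple of small points worth tightening. First, for the quadratic-factor step you flag, it is cleanest to note at the outset that irreducibility of $G$ forces $C\ne 0$, so the six roots $\pm\sqrt{\alpha_i}$ are distinct; then the two cases are exactly a matched pair $\{\sqrt{\alpha_i},-\sqrt{\alpha_i}\}$ (giving $\alpha_i\in\Q$) and a cross pair $\{\epsilon_1\sqrt{\alpha_i},\epsilon_2\sqrt{\alpha_j}\}$ with $i\ne j$ (giving $\sqrt{\alpha_i\alpha_j}\in\Q$ and, by squaring the rational sum, $\alpha_i+\alpha_j\in\Q$, whence $\alpha_k=-A-(\alpha_i+\alpha_j)\in\Q$). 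Second, in the unique-factorization step it helps to observe that, having excluded linear and quadratic factors, both cubics $P$ and $Q$ are automatically irreducible (and distinct since $F$ is separable), so $\{\tilde P,\tilde Q\}=\{P,Q\}$ follows immediately. Your final sign absorption also explains the lowercase $c$ in the paper's displayed conclusion \eqref{Eq:HJMAconditions}: after relabeling $m\mapsto -m$ when $c=-C$, the formula reads $B=n^2-2mC$.
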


The following theorem, known as \emph{Dedekind's Index Criterion}, or simply \emph{Dedekind's Criterion} if the context is clear, is a standard tool used in determining the monogenicity of a polynomial.
\begin{thm}[Dedekind \cite{Cohen}]\label{Thm:Dedekind}
Let $K=\Q(\theta)$ be a number field, $T(x)\in \Z[x]$ the monic minimal polynomial of $\theta$, and $\Z_K$ the ring of integers of $K$. Let $p$ be a prime number and let $\overline{ * }$ denote reduction of $*$ modulo $p$ (in $\Z$, $\Z[x]$ or $\Z[\theta]$). Let
\[\overline{T}(x)=\prod_{i=1}^k\overline{\tau_i}(x)^{e_i}\]
be the factorization of $T(x)$ modulo $p$ in $\F_p[x]$, and set
\[h_1(x)=\prod_{i=1}^k\tau_i(x),\]
where the $\tau_i(x)\in \Z[x]$ are arbitrary monic lifts of the $\overline{\tau_i}(x)$. Let $h_2(x)\in \Z[x]$ be a monic lift of $\overline{T}(x)/\overline{h_1}(x)$ and set
\[F(x)=\dfrac{h_1(x)h_2(x)-T(x)}{p}\in \Z[x].\]
Then
\[\left[\Z_K:\Z[\theta]\right]\not \equiv 0 \pmod{p} \Longleftrightarrow \gcd\left(\overline{F},\overline{h_1},\overline{h_2}\right)=1 \mbox{ in } \F_p[x].\]
\end{thm}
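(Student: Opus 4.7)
The plan is to control the $p$-torsion in the finite abelian group $\Z_K/\Z[\theta]$, whose order is $[\Z_K:\Z[\theta]]$. The index is coprime to $p$ exactly when no $\alpha\in\Z_K\setminus\Z[\theta]$ satisfies $p\alpha\in\Z[\theta]$. Writing such an $\alpha$ as $g(\theta)/p$ with $g(x)\in\Z[x]$, the problem reduces to identifying the image $\overline{J}$ of
\[ J:=\{\,g(x)\in\Z[x] \,:\, g(\theta)/p\in\Z_K\,\} \]
inside the quotient $\F_p[x]/(\overline{T}(x))$: the index is $p$-coprime if and only if $\overline{J}=0$.

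The key algebraic input is the identity $h_1(x)h_2(x)=T(x)+pF(x)$, which immediately shows $h_1(\theta)h_2(\theta)/p=F(\theta)\in\Z[\theta]$, and, via the Chinese Remainder decomposition
\[ \F_p[x]/(\overline{T}(x))\;\cong\;\prod_{i=1}^{k}\F_p[x]/(\overline{\tau_i}(x)^{e_i}), \]
lets one track the $p$-adic behaviour of $g(\theta)$ component by component. Following the Kummer--Dedekind picture, the primes above $p$ should be $\p_i=(p,\tau_i(\theta))$ with ramification indices $e_i$. The central lemma to establish is: $g(\theta)/p\in\Z_K$ if and only if, in every local factor, the reduction $\overline{g}$ is a multiple of $\overline{\tau_i}^{e_i-1}$, with a further compatibility coming from the ``defect'' polynomial $\overline{F}$.

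From this lemma, one concludes $\overline{J}\neq 0$ iff some $\overline{\tau_i}$ is a common divisor of $\overline{F}$, $\overline{h_1}$ and $\overline{h_2}$. The forward direction is constructive: given such a common divisor $\overline{\tau_i}$, remove one factor of $\overline{\tau_i}$ from $\overline{h_1}$ (or $\overline{h_2}$), lift to $\Z[x]$, and verify using the identity $h_1 h_2 = T+pF$ that the corresponding $g(\theta)/p$ is integral but lies outside $\Z[\theta]$. Conversely, if no $\overline{\tau_i}$ is a common divisor, any candidate $g(\theta)/p\in\Z_K$ must reduce to $0$ in each local factor of $\F_p[x]/(\overline{T})$, forcing $g(\theta)\in p\Z[\theta]$ and hence $\alpha\in\Z[\theta]$.

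The principal obstacle is the central lemma: characterising integrality of $g(\theta)/p$ purely in terms of a divisibility condition on $\overline{g}$ by powers of $\overline{\tau_i}$. The cleanest route is via $p$-adic completion. By Hensel's lemma the factorization $\overline{T}=\prod\overline{\tau_i}^{e_i}$ lifts to a coprime decomposition $T(x)=\prod T_i(x)$ over $\Z_p$, giving $\Z_p[\theta]\cong\prod_i \Z_p[x]/(T_i(x))$; then the valuation of $g(\theta)$ in each local factor can be read off from the reduction of $g$ modulo $\overline{\tau_i}$ together with Newton polygon / Eisenstein-style arguments, and the global integrality of $g(\theta)/p$ becomes a simultaneous valuation condition, which unpacks precisely to the gcd criterion stated in the theorem.
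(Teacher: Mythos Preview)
The paper does not prove this statement; Theorem~\ref{Thm:Dedekind} is quoted from Cohen's textbook as a standard tool, so there is no in-paper argument to compare your proposal against.

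Judged on its own, your outline has the right architecture---reduce to $p$-torsion in $\Z_K/\Z[\theta]$, exploit the identity $h_1h_2=T+pF$, and localise via the CRT splitting of $\F_p[x]/(\overline T)$---but the step you yourself label the ``principal obstacle'' is essentially the entire theorem, and the sketch does not close it. Your central lemma is not a precise statement: ``$\overline{\tau_i}^{\,e_i-1}\mid\overline g$ in each factor, plus a compatibility from $\overline F$'' leaves the compatibility unspecified, and that unspecified compatibility \emph{is} Dedekind's criterion. In the $p$-adic route you propose, after the Hensel lift $T=\prod_i T_i$ over $\Z_p$ each $T_i$ may still be reducible over $\Q_p$, so there is no single valuation on $\Z_p[x]/(T_i)$ to ``read off''; what must actually be shown is that this local ring is a DVR if and only if $e_i=1$ or $\overline{\tau_i}\nmid\overline F$ (equivalently, that its maximal ideal $(p,\tau_i(\theta))$ is principal precisely then). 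That computation is the substantive content of the theorem and is absent from your proposal. Your forward-direction construction has the same gap: you produce a candidate $g(\theta)/p$ by stripping a factor of $\overline{\tau_i}$, but the identity $h_1h_2=T+pF$ only gives $g(\theta)\tau_i(\theta)\in p\Z[\theta]$, not $g(\theta)/p\in\Z_K$; establishing integrality again requires the missing local analysis. (Also note that invoking the Kummer--Dedekind description of the primes $\p_i=(p,\tau_i(\theta))$ with ramification $e_i$ as an input is circular, since that description holds exactly when $p\nmid[\Z_K:\Z[\theta]]$.)
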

The next two theorems are algorithmic versions of Theorem \ref{Thm:Dedekind} crafted for specific types of polynomials. % and designed to be more user-friendly than Theorem \ref{Thm:Dedekind}.
The first of these theorems is an adaptation of a theorem due to Jakhar, Laishram and Yadav \cite[Theorem 1.2]{JLY} to the specific situation of item \eqref{Main I:2} in Theorem \ref{Thm:Main}.
\begin{thm}\label{Thm:JLY}
Let $a,b\in \Z$ and let
\[f(x)=x^2(x^2+a)^2+b=x^6+2ax^4+a^2x^2+b\] be irreducible over $\Q$. Then
\begin{equation*}%\label{Eq:Discf2}
\Delta(f)=-2^6b^3(4a^3-27b)^2.
\end{equation*} Suppose that $K=\Q(\theta)$ where $f(\theta)=0$, and let $\Z_K$ denote the ring of integers of $K$. A prime divisor $p$ of $\Delta(f)$ does not divide the index $[\Z_K:\Z[\theta]]$ if and only if $p$ satisfies one of the following conditions:
\begin{enumerate}
  \item \label{JLY:1} when $p\mid b$, then $p^2\nmid b$;
  \item \label{JLY:2} when $p\mid a$ and $p\nmid b$ with $j\ge 1$ as the highest power of $p$ dividing 6, then
      \[\mbox{either} \ p\mid b_1 \ {and} \ p\nmid c_1 \quad {or} \quad  p\nmid (b_1\left((b_1b)^3+bc_1^3\right),\]
      where
      \[b_1=\frac{2a}{p} \quad  \mbox{and} \quad c_1=\frac{b+(-b)^{p^j}}{p};\]
  \item \label{JLY:3} when $2\nmid ab$, then $b\equiv 1 \pmod{4}$;
  \item \label{JLY:4} when $p\nmid 2ab$, then $p^2\nmid (4a^3-27b)$.
\end{enumerate}
\end{thm}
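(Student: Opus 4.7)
The plan is in two parts: first derive the discriminant formula, then apply Dedekind's Criterion (Theorem~\ref{Thm:Dedekind}) prime-by-prime to extract the four stated conditions.

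For the discriminant, I would begin from the key structural observation $f(x)=g(x^2)$, where $g(y)=y^3+2ay^2+a^2y+b$. A direct application of the standard discriminant formula for a monic cubic gives $\Delta(g)=b(4a^3-27b)$. Writing the roots of $g(x^2)$ as $\pm\beta_i$ with $\beta_i^2=\alpha_i$ a root of $g$, and grouping the squared root differences into the $n$ pairs of the form $(\beta_i,-\beta_i)$ and the $n(n-1)$ pairs of the form $(\pm\beta_i,\pm\beta_j)$ with $i\neq j$, yields the identity
\[\Delta(g(x^2)) \;=\; (-4)^{\deg g}\,g(0)\,\Delta(g)^2.\]
Specialising to our cubic gives $\Delta(f)=-2^6 b^3(4a^3-27b)^2$.

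For the four conditions, I would partition the prime divisors of $\Delta(f)$ according to which factor of $-2^6b^3(4a^3-27b)^2$ they hit, and apply Theorem~\ref{Thm:Dedekind} in each case. When $p\mid b$, $\bar f(x)\equiv x^2(x^2+\bar a)^2\pmod p$ (or $x^6$ if also $p\mid a$); the natural Dedekind lift $h_1,h_2$ produces a remainder whose residue is the constant $-b/p$, so the gcd condition collapses to $p^2\nmid b$. When $p=2$ with $ab$ odd, a direct computation gives $\bar f(x)\equiv(x^3+x+1)^2\pmod 2$, and a short Dedekind calculation shows that $2\nmid[\Z_K:\Z[\theta]]$ exactly when $(1-b)/2$ is even, i.e., $b\equiv 1\pmod 4$. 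When $p\nmid 2ab$, any prime divisor of $\Delta(f)$ must divide $4a^3-27b$, so $\bar g$ has a repeated root and the gcd criterion reduces to $p^2\nmid(4a^3-27b)$. The remaining case $p\mid a$ and $p\nmid b$ leaves $\bar f(x)=x^6+\bar b\pmod p$, whose factorisation over $\F_p$ depends on $p$-adic residue data and yields the condition in terms of $b_1=2a/p$ and $c_1=(b+(-b)^{p^j})/p$.

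The main obstacle is precisely this last case: the shape of the factorisation of $x^6+\bar b$ over $\F_p$ varies with $p$ (the highest power $p^j$ dividing $6$ enters through the definition of $c_1$), and choosing a Dedekind lift that correctly captures the $p$-adic valuations of the intermediate coefficients requires the same Newton-polygon-style bookkeeping used in the proof of \cite[Theorem 1.2]{JLY}. Rather than reprove this, I would appeal directly to that result, verifying that its hypotheses specialise cleanly to $f(x)=x^2(x^2+a)^2+b$ with the substitutions $b_1=2a/p$, $c_1=(b+(-b)^{p^j})/p$ above. Combined with the three elementary cases and the discriminant formula, this completes the proof.
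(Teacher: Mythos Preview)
Your proposal is sound and in fact more detailed than what the paper does: the paper states Theorem~\ref{Thm:JLY} in the Preliminaries section without proof, simply presenting it as a specialisation of \cite[Theorem~1.2]{JLY} to the polynomial $f(x)=x^2(x^2+a)^2+b$. Your sketch of the discriminant formula via $f(x)=g(x^2)$ and the case-by-case Dedekind analysis for the three elementary cases, followed by an appeal to \cite{JLY} for the delicate $p\mid a$, $p\nmid b$ case, is entirely consistent with this and supplies exactly the bridging details one would want.
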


The next theorem is an adaptation of a theorem due to Jakhar, Kalwaniya and Kotyada \cite[Theorem 1.2]{JKK} to the specific situation of item \eqref{Main I:3} in Theorem \ref{Thm:Main}.
\begin{thm}\label{Thm:JKK}
Let $a,b\in \Z$ and let
\[f(x)=x^6+a(bx^2+1)^2=x^6+ab^2x^4+2abx^2+a\] be irreducible over $\Q$. Then
\begin{equation*}%\label{Eq:Discf2}
\Delta(f)=-2^6a^5(4ab^3-27)^2.
\end{equation*} Suppose that $K=\Q(\theta)$ where $f(\theta)=0$, and let $\Z_K$ denote the ring of integers of $K$. A prime divisor $p$ of $\Delta(f)$ does not divide the index $[\Z_K:\Z[\theta]]$ if and only if $p$ satisfies one of the following conditions:
\begin{enumerate}
  \item \label{JKK:1} when $p\mid a$, then $p^2\nmid a$;
  \item \label{JKK:2} when $p\nmid a$ and $p\mid b$ with $j\ge 1$ as the highest power of $p$ dividing 6, then
      \[\mbox{either} \ p\mid b_1 \ {and} \ p\nmid c_1 \quad {or} \quad  p\nmid b_1\left(-c_1^3+ab_1^3\right),\]
      where
      \[b_1=\frac{2ab}{p} \quad  \mbox{and} \quad c_1=\frac{a+(-a)^{p^j}}{p};\]
  \item \label{JKK:3} when $2\nmid ab$, then $a\equiv 1 \pmod{4}$;
  \item \label{JKK:4} when $p\nmid 2ab$, then $p^2\nmid (4ab^3-27)$.
\end{enumerate}
\end{thm}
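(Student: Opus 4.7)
The plan is to obtain Theorem~\ref{Thm:JKK} in two steps: first derive the discriminant formula, then specialize the general monogenicity criterion of Jakhar, Kalwaniya and Kotyada from \cite[Theorem 1.2]{JKK} to the polynomial at hand.

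For the discriminant, I would write $f(x) = g(x^2)$ where $g(y) := y^3 + ab^2 y^2 + 2aby + a$, and use the standard identity
\[\Delta\bigl(g(x^2)\bigr) = -64 \cdot g(0) \cdot \Delta(g)^2.\]
Applying the cubic discriminant formula $\Delta(y^3+Ay^2+By+C) = 18ABC - 4A^3C + A^2B^2 - 4B^3 - 27C^2$ with $A = ab^2$, $B = 2ab$, $C = a$, the terms $-4A^3C$ and $A^2B^2$ cancel, leaving $\Delta(g) = a^2(4ab^3 - 27)$. Combined with $g(0) = a$, this yields $\Delta(f) = -2^6 a^5(4ab^3-27)^2$, as claimed.

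For the index conditions (1)--(4), I would invoke \cite[Theorem 1.2]{JKK}, which formulates Dedekind-style criteria for each prime $p \mid \Delta(f)$ separately, according to which coefficients of $f$ the prime $p$ meets. Translating the case split there to our polynomial $f(x) = x^6 + ab^2 x^4 + 2abx^2 + a$ produces exactly the four listed cases:
\begin{enumerate}
\item When $p \mid a$, the reduction is $\overline{f}(x) = x^6 \in \F_p[x]$; the JKK framework reduces Dedekind's criterion to the single condition $p^2 \nmid a$.
\item When $p \nmid a$ but $p \mid b$, the reduction $\overline{f}(x)$ has a triple factor of the form $(x^2+a)^3$, and the $p$-adic expansion of $f$ modulo $p^2$ naturally introduces $b_1 = 2ab/p$ and $c_1 = (a+(-a)^{p^j})/p$; the exponent $j$ encodes the exceptional behavior at the primes $p=2,3$ dividing $6$.
\item When $p = 2$ and $ab$ is odd, a direct computation gives $\overline{f}(x) = (x^3+x^2+1)^2 \in \F_2[x]$, and the criterion collapses to $a \equiv 1 \pmod{4}$.
\item When $p \nmid 2ab$ divides $\Delta(f)$, necessarily $p \mid (4ab^3-27)$, and the JKK condition becomes $p^2 \nmid (4ab^3-27)$.
\end{enumerate}

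The main obstacle lies in case (2): one must expand $f(x)$ modulo $p^2$ and verify that the triple factor of $\overline{f}(x)$ behaves as predicted by the JKK theorem, with the sub-case distinction controlled by $j$. Since the polynomial $f(x)$ matches the structural hypothesis of \cite[Theorem 1.2]{JKK} exactly (an index form with a cube-like coefficient pattern coming from $a(bx^2+1)^2$), the remaining work is purely algebraic substitution; I would simply check that each of the four bulleted conditions is the direct translation of the corresponding statement in \cite{JKK} after inserting the coefficients of $f$. The parallel between this proof and that of Theorem~\ref{Thm:JLY} is exact, with the roles of $a$ and $b$ interchanged in the appropriate way.
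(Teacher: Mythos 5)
Your proposal is correct and takes essentially the same route as the paper: the paper gives no proof of Theorem \ref{Thm:JKK}, presenting it exactly as a specialization of \cite[Theorem 1.2]{JKK} to $f(x)=x^6+a(bx^2+1)^2$, and it quotes the discriminant from \cite[Theorem 1]{HJDisc} (or Maple), which your identity $\Delta\bigl(g(x^2)\bigr)=-2^6\,g(0)\,\Delta(g)^2$ together with the cubic discriminant formula reproduces correctly, giving $\Delta(g)=a^2(4ab^3-27)$ and $\Delta(f)=-2^6a^5(4ab^3-27)^2$. One cosmetic slip: in your case (2), for $p=2$ (with $2\mid b$, $2\nmid a$) the reduction is $(x^3+a)^2$, not a cube $(x^2+a)^3$ (the cube shape occurs only at $p=3$), but since the verification there is delegated to \cite{JKK}, this does not affect the argument.
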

\section{The Proof of Theorem \ref{Thm:Main}}\label{Section:MainProof}
\begin{proof}
  To establish item \eqref{Main I:1}, we consider the three possible forms for $f(x)$, depending on whether both $a$ and $b$ are zero, or just one of $a$ and $b$ is zero. For each of these possibilities for $f(x)$, we let $g(x)$ and $h(x)$ be as defined in Lemma \ref{Lem:AJ}.

  Suppose first that $a=0$ and $b=0$, so that $f(x)=x^6+c$ and $g(x)=x^3+c$. Then $\Delta(g)=-27c^2$ is clearly not a square, and therefore, $\Gal(f)\not \simeq C_6$ by item \eqref{C6} of Lemma \ref{Lem:AJ}.

  Suppose next that $a=0$ and $b\ne 0$, so that
  \begin{equation}\label{Eq:fg}
  f(x)=x^6+bx^2+c \quad \mbox{and}\quad g(x)=x^3+bx+c.
   \end{equation} Assume, by way of contradiction, that $\Gal(f)\simeq C_6$. Then, by item \eqref{C6} of Lemma \ref{Lem:AJ}, we have that
   $h(x)=x^6-bx^4-c^2$ is reducible over $\Q$. Define
  \begin{equation}\label{Eq:h1}
  h_1(x):=x^3-bx^2-c^2.
   \end{equation} If $h_1(x)$ is irreducible over $\Q$, then since $h(x)$ is reducible, it follows from \eqref{Eq:HJMAconditions} of Lemma \ref{Lem:HJMS}, with \[G(x)=h_1(x),\quad F(x)=h(x),\quad A=-b, \quad B=0\quad  \mbox{and} \quad C=c\] that
   \begin{equation}\label{Eq:HJMSconditions1}
  -b=2n-m^2 \quad \mbox{and} \quad n^2=2mc.
  \end{equation}
  Combining the two equations in \eqref{Eq:HJMSconditions1}, we get that
  \begin{equation}\label{Eq:b1}
  b=\frac{n^4-8c^2n}{4c^2}.
  \end{equation} Then, by \eqref{Eq:b1}, we have that
  \[\Delta(g)=-(4b^3+27c^2)=\frac{-(n^6-20n^3c^2+108c^4)(n^3-2c^2)^2}{16c^6}.\]
   Observe that $2\mid n$ from \eqref{Eq:HJMSconditions1}. Hence,
  \[-(n^6-20n^3c^2+108c^4)\equiv 20c^4 \pmod{32}.\]
  Since 20 is not a square modulo 32, it follows that $\Delta(g)$ is not a square, which contradicts item \eqref{C6} of Lemma \ref{Lem:AJ}.
  Therefore, $h_1(x)$ must be reducible. Let $r\in \Z$ be a zero of $h_1(x)$. Then $r^3-br^2-c^2=0$ from \eqref{Eq:h1}, so that $b=r-(c/r)^2$ with $r\mid c$. Hence, from \eqref{Eq:fg}, we see that
 \[g(x)=x^3+\left(r-(c/r)^2\right)x+c=\left(x+c/r\right)\left(x^2-(c/r)x+r\right),\] and therefore,
 \[f(x)=\left(x^2+c/r\right)\left(x^4-(c/r)x^2+r\right),\]
 contradicting the fact that $f(x)$ is irreducible over $\Q$. Consequently, $\Gal(f)\not \simeq C_6$ in this case as well.

 Finally, suppose that $a\ne 0$ and $b=0$, so that
  \begin{equation}\label{Eq:fg2}
  f(x)=x^6+ax^4+c \quad \mbox{and}\quad g(x)=x^3+ax^2+c.
   \end{equation} Assume, by way of contradiction, that $\Gal(f)\simeq C_6$. Then, by item \eqref{C6} of Lemma \ref{Lem:AJ}, we have that
   $h(x)=x^6+acx^2-c^2$ is reducible over $\Q$. Define
  \begin{equation}\label{Eq:h2}
  h_2(x):=x^3+acx-c^2.
   \end{equation}
   If $h_2(x)$ is irreducible over $\Q$, then since $h(x)$ is reducible, it follows from \eqref{Eq:HJMAconditions} of Lemma \ref{Lem:HJMS}, with \[G(x)=h_2(x),\quad F(x)=h(x),\quad A=0, \quad B=ac\quad  \mbox{and} \quad C=c\] that
   \begin{equation}\label{Eq:HJMSconditions2}
  2n=m^2 \quad \mbox{and} \quad ac=n^2-2mc.
  \end{equation}
  Combining the two equations in \eqref{Eq:HJMSconditions2}, we get that
  \begin{equation}\label{Eq:b2}
  a=\frac{m^4-8mc}{4c}.
  \end{equation} Then, by \eqref{Eq:b2}, we have that
  \[\Delta(g)=-c(4a^3+27c)=\frac{-(m^6-20m^3c+108c^2)(m^3-2c)^2}{16c^2}.\]
   Observe that $2\mid m$ from \eqref{Eq:HJMSconditions2}. Hence,
  \[-(m^6-20m^3c+108c^2)\equiv 20c^2 \pmod{32},\] which implies, as in the previous case, that $\Delta(g)$ is not a square, contradicting item \eqref{C6} of Lemma \ref{Lem:AJ}.
  Therefore, $h_2(x)$ must be reducible. Let $r\in \Z$ be a zero of $h_2(x)$. Then $r^3+acr-c^2=0$ from \eqref{Eq:h2}, so that $a=(c^2-r^3)/(cr)$ with $r\mid c$. Thus, from \eqref{Eq:fg2}, we see that
 \[g(x)=x^3+\left(\frac{c^2-r^3}{cr}\right)x^2+c=\left(x+c/r\right)\left(x^2-(r^2/c)x+r\right),\] and therefore,
 \[f(x)=\left(x^2+c/r\right)\left(x^4-(r^2/c)x^2+r\right),\]
 contradicting the fact that $f(x)$ is irreducible over $\Q$. Thus, $\Gal(f)\not \simeq C_6$ in this final case, and consequently, there exist no monogenic even cyclic sextic binomials or trinomials, which completes the proof of item \eqref{Main I:1}.

 For item \eqref{Main I:2}, we first note, in the context of Lemma \ref{Lem:AJ}, that $g(x)=x(x+a)^2+b$ and $f(x)=g(x^2)$, with
 \begin{equation}\label{Eq:Delgf2}
 \Delta(g)=b(4a^3-27b) \quad \mbox{and} \quad \Delta(f)=-2^6b^3(4a^3-27b)^2, 
 \end{equation}
  by \cite[Theorem 2]{HJDisc} (or simply Maple). For brevity of notation, we define
 \begin{equation}\label{Eq:delta}
 \delta:=4a^3-27b.
 \end{equation} If $\Gal(f)\simeq C_6$, then $\Delta(g)$ is a square by item \eqref{C6} of Lemma \ref{Lem:AJ}. Conversely, suppose then that $\Delta(g)$ is a square. If $-b$ is a square, say $-b=t^2$, then
 \[f(x)=x^2(x^2+a)^2-t^2=\left(x(x^2+a)-t\right)\left(x(x^2+a)+t\right),\]
 contradicting the fact that $f(x)$ is irreducible. Hence, $-b$ is not a square. With $h(x)$ as defined in Lemma \ref{Lem:AJ}, we have here that
 \[h(x)=x^6-a^2x^4+2abx^2-b^2=(x^3-ax^2+b)(x^3+ax^2-b).\] Thus, $\Gal(f)\simeq C_6$ by item \eqref{C6} of Lemma \ref{Lem:AJ}.

 Let $\FF_1$ be as defined in \eqref{F1}. From the previous argument, every $f(x)\in \FF_1$ is such that $\Gal(f)\simeq C_6$. To see that $\FF_1$ is infinite, let $n\ge 1$ be an integer, let $a=9n^2+3n+7$ and let $b=a^2$. Then
 \[f(x)=x^2(x^2+9n^2+3n+7)^2+(9n^2+3n+7)^2\equiv x^6+2x^4+x^2+1 \pmod{3},\] and since $x^6+2x^4+x^2+1$ is irreducible in $\F_3[x]$, we have that $f(x)$ is irreducible over $\Q$ for all integers $n\ge 1$. From \eqref{Eq:Delgf2}, an easy computation reveals that
 \[\Delta(g)=(6n+1)^2(9n^2+3n+7)^4\] is a square,
 which implies that $f(x)\in \FF_1$. Hence, $\FF_1$ is infinite.

 To determine the monogenic polynomials in $\FF_1$, we assume that
 \[f(x)=x^2(x^2+a)+b=x^6+2ax^4+a^2+b\in \FF_1\] is monogenic. Then, for primes dividing $\Delta(f)$ in \eqref{Eq:Delgf2}, we use the fact that $\Delta(g)$ is a square and Theorem \ref{Thm:JLY} to derive necessary criteria on the coefficients of $f(x)$.

  It is easy to see that condition \eqref{JLY:1} of Theorem \ref{Thm:JLY} implies that $b$ is squarefree. We also see from \eqref{Eq:Delgf2} that condition \eqref{JLY:2} of Theorem \ref{Thm:JLY} is applicable only for primes $p\in \{2,3\}$. 
      Clearly, $p=2$ divides $\Delta(f)$ from \eqref{Eq:Delgf2}. Since $f(x)\in \FF_1$, we know that
 $\Delta(g)$ is a square, and therefore, from \eqref{Eq:Delgf2}, we have that $b\mid 2a$. Thus, $\delta/b\in \Z$, where $\delta$ is as in \eqref{Eq:delta},  and we see from \eqref{Eq:Delgf2} that $\Delta(g)=b^2(\delta/b)$ so that $\delta/b$ is a square. It is easy to verify that
 \[\delta/b\equiv \left\{ \begin{array}{cl}
   5 \pmod{8} & \mbox{if $2\mid a$},\\[4pt]
   3 \pmod{4} & \mbox{if $2\nmid a$ and $2\mid b$,}
 \end{array}\right.\] which in these cases contradicts the fact that $\delta/b$ is a square. Thus, we conclude that $2\nmid ab$. One consequence is that $b\mid a$. Additionally, it  follows from condition \eqref{JLY:3} of Theorem \ref{Thm:JLY} that $b\equiv 1 \pmod{4}$.

 Suppose that $p\mid \Delta(f)$ and $p\nmid 2ab$. Then $p\mid \delta$ from \eqref{Eq:Delgf2}, which implies that $p^2\mid \delta/b$ from \eqref{Eq:Delgf2} since $\delta/b$ is a square. However, since $f(x)$ is monogenic, we have that $p^2\nmid \delta$ by condition \eqref{JLY:4} of Theorem \ref{Thm:JLY}, which yields the contradiction that $p^2\nmid \delta/b$. Consequently, every prime dividing $\Delta(f)$ must divide $2ab$, or more simply $2a$, since $b\mid a$.

We consider next the prime $p=3$, and we suppose that $3\mid \Delta(f)$. Recall that $3\mid a$ by the previous discussion. Then $\delta/(9b)\in \Z$ from \eqref{Eq:Delgf2}, and $\delta/(9b)$ is a square since $\delta/b$ is a square. If $9\mid a$, then it is easy to confirm that
\[\delta/(9b)\equiv 6 \pmod{9},\] which contradicts the fact that $\delta/(9b)$ is a square. Hence, $3\mid \mid a$. Then
\begin{equation}\label{Eq:delta/9b}
\delta/(9b)= \left\{ \begin{array}{cl}
   4(a/b)(a/3)^2-3 & \mbox{if $3\mid b$},\\[4pt]
   3\left(4(a/3b)(a/3)^2-1\right) & \mbox{if $3\nmid b$.}
 \end{array}\right.
 \end{equation}
Let $p$ be a prime divisor of $\delta/(9b)$. 

We first consider the case when $3\mid b$ in \eqref{Eq:delta/9b}. We know from previous arguments that $p\mid 2a$. Clearly, $p\ne 2$. Since $3\mid \mid a$, we see conclude that $3\nmid (a/b)(a/3)$, so that $p\ne 3$. If $p>3$, then $p\mid (a/b)(a/3)$, which implies the contradiction that $p\mid 3$. Thus, since $\delta/(9b)$ is a square, we must have $\delta/(9b)=1$, which implies that
 \[(a/b)(a/3)^2=1.\] Hence, since $b\equiv 1 \pmod{4}$, we conclude that $a=b=-3$, which yields the single polynomial $f(x)=x^6-6x^4+9x^2-3$. It is easily confirmed that $f(x)$ is monogenic with $\Gal(f)\simeq C_6$.

We consider next the case when $3\nmid b$ in \eqref{Eq:delta/9b}. If $p>3$, then since $p\mid a$, we have that $p\mid (a/3)$ which yields the contradiction $p\mid 1$. Hence, $p=3$ is the only prime divisor of the square $\delta/(9b)$, and so we have
\begin{equation}\label{Eq:Dio1}
4(a/3b)(a/3)^2-1=3^{2k+1},
\end{equation}
for some nonnegative integer $k$. Suppose that $k\ge 1$. Then we have from \eqref{Eq:Dio1} that
\begin{equation}\label{Eq:Congb}
b\equiv 4(a/3)^3\equiv \pm 4 \pmod{9}.
\end{equation} We give details only in the case $b\equiv 4 \pmod{9}$ since the case $b\equiv -4 \pmod{9}$ is similar.
We examine condition \eqref{JLY:2} of Theorem \ref{Thm:JLY} with $p=3$ since $3\mid a$ and $3\nmid b$. From condition \eqref{JLY:2} and \eqref{Eq:Congb}, we have
\[b_1^3=(2a/3)^3\equiv 2(4(a/3)^3)\equiv -1 \Mod{9} \quad \mbox{and} \quad c_1^3=\left(\frac{b-b^3}{3}\right)^3\equiv 1 \Mod{9}.\] Since $b^2\equiv 7\pmod{9}$,
 it follows that
 \[b(b_1^3b^2+c_1^3)\equiv 4((-1)(7)+1)\equiv 3 \pmod{9},\]
 which implies that condition \eqref{JLY:2} of Theorem \ref{Thm:JLY} is not satisfied, and consequently, $f(x)$ is not monogenic. That leaves the case $k=0$
 in \eqref{Eq:Dio1}. In that situation, we see that
 \[(a/3b)(a/3)^2=1,\] from which we deduce that $a=3$ and $b=1$, since $b\equiv 1 \pmod{4}$. Hence, we get the single polynomial
 $f(x)=x^6+6x^4+9x^2+1$, which is easily verified to be monogenic with $\Gal(f)\simeq C_6$.

  We turn now to the proof of item \eqref{Main I:3}. Although the approach is similar to the proof of item \eqref{Main I:2}, we provide details for the sake of completeness. We first note, in the context of Lemma \ref{Lem:AJ}, that $g(x)=x^3+a(bx+1)^2$ with
 \begin{equation}\label{Eq:Delgf3}
 \Delta(g)=a^2(4ab^3-27)\quad \mbox{and} \quad \Delta(f)=-2^6a^5(4ab^3-27)^2,
  \end{equation}
  by \cite[Theorem 1]{HJDisc} (or simply Maple). For brevity of notation, we define
 \begin{equation}\label{Eq:delta3}
 \delta:=4ab^3-27.
 \end{equation} If $\Gal(f)\simeq C_6$, then $\Delta(g)$ is a square by item \eqref{C6} of Lemma \ref{Lem:AJ}, and therefore from \eqref{Eq:Delgf3}, we see that $\delta$ is a square. Conversely, suppose that $\delta$ is a square, so that $\Delta(g)$ is a square from \eqref{Eq:Delgf3}. If $-a$ is a square, say $-a=t^2$, then
 \[f(x)=(x^3-btx^2-t)(x^3+btx^2+t),\]
 which contradicts the fact that $f(x)$ is irreducible over $\Q$. Hence, $-a$ is not a square. With $h(x)$ as defined in Lemma \ref{Lem:AJ}, we see that
 \[h(x)=x^6-2abx^4+a^2b^2x^2-a^2=(x^3-abx-a)(x^3-abx+a).\] Thus, $\Gal(f)\simeq C_6$ by item \eqref{C6} of Lemma \ref{Lem:AJ}.

 Let $\FF_2$ be as defined in \eqref{F2}. From the argument above, every $f(x)\in \FF_2$ is such that $\Gal(f)\simeq C_6$.
 To see that $\FF_2$ is infinite, let $a=9n^2+15n+13$ and let $b=1$, where $n\in \Z$. Then
 \[f(x)=x^6+(9n^2+15n+13)(x^2+1)^2\equiv x^6+x^4+2x^2+1 \pmod{3},\]
  and since $x^6+x^4+2x^2+1$ is irreducible in $\F_3[x]$, we conclude that $f(x)$ is irreducible over $\Q$ for all integers $n$. From \eqref{Eq:Delgf3}, an easy computation confirms that
 \[\Delta(g)=(6n+5)^2(9n^2+15n+13)^2\] is a square,
 which implies that $f(x)\in \FF_2$. Hence, $\FF_2$ is infinite.

  To determine the monogenic polynomials in $\FF_2$, we assume that
 \[f(x)=x^6+a(bx^2+1)^2=x^6+ab^2x^4+2abx^2+a\in \FF_2\] is monogenic. Then, for primes dividing $\Delta(f)$ in \eqref{Eq:Delgf3}, we use Theorem \ref{Thm:JKK}, and the fact that $\Delta(g)$ is a square, to derive necessary criteria on the coefficients of $f(x)$. 

We first observe that condition \eqref{JKK:1} of Theorem \ref{Thm:JKK} implies that $a$ is squarefree, while condition \eqref{JKK:2} is only applicable when $p\in \{2,3\}$.
Since $f(x)\in \FF_2$, it follows that
 $\Delta(g)$ is a square, and therefore, from \eqref{Eq:Delgf3}, we see that $\delta$ is a square. An easy calculation shows that if $2\mid ab$, then $\delta\equiv 5\pmod{8}$, which is impossible since 5 is not a square modulo 8. Hence, we conclude that $2\nmid ab$ and that condition \eqref{JKK:2} of Theorem \ref{Thm:JKK} is not applicable for $p=2$. Consequently, it follows from condition \eqref{JKK:3} of Theorem \ref{Thm:JKK} that $a\equiv 1 \pmod{4}$.

Suppose that $p\mid \Delta(f)$ and $p\nmid 2ab$. Then $p\mid \delta$ from \eqref{Eq:Delgf3}, which implies that $p^2\mid \delta$ from \eqref{Eq:Delgf3} since $\delta$ is a square. However, $p^2\nmid \delta$ by condition \eqref{JLY:4} of Theorem \ref{Thm:JLY} since $f(x)$ is monogenic.  Thus, every prime dividing $\Delta(f)$ must divide $2ab$. Consequently, if $p\mid \delta$, then $p=3$ and
\begin{equation}\label{Eq:deltaDio}
\delta=4ab^3-27=3^{2k},
\end{equation}
for some integer $k\ge 0$.

If $k=0$ in \eqref{Eq:deltaDio}, then $a=-7$ since $a\equiv 1 \pmod{4}$, and $b=-1$. Hence, in this case, we get the quadrinomial $x^6-7x^4+14x^2-7$, which is easily confirmed to be monogenic and cyclic.

Suppose then that $k\ge 1$ in \eqref{Eq:deltaDio}, so that $3\mid \delta$. If $3\mid a$, then we see from \eqref{Eq:deltaDio} that $3\mid b$ since $a$ is squarefree. Therefore, we have from \eqref{Eq:deltaDio} that
\[3^{2k-3}=\delta/3^3=4a(b/3)^3-1\equiv 2 \pmod {3},\]
which contradicts the fact that $k\ge 1$. Hence, $3\nmid a$, so that $3\mid b$, since every prime dividing $\delta$ must divide $2ab$. A similar argument shows that $3\mid \mid b$.
Therefore, $k\ge 2$ and
\begin{equation}\label{Eq:delta/27}
\delta/3^3=4a(b/3)^3-1=3^{2k-3}.
\end{equation} Suppose that $k=2$. Then, since $a\equiv 1 \pmod{4}$, the only solution to \eqref{Eq:delta/27} is $a=1$ and $b=3$, producing the quadrinomial $x^6+9x^4+6x^2+1$, which is easily verified to be monogenic and cyclic.

Suppose then that $k\ge 3$ in \eqref{Eq:delta/27}. Then, since $3\nmid a$ and $3\mid \mid b$, an easy computation shows that the only solutions to the congruence $4a(b/3)^3\equiv 1 \pmod{27}$ are
\begin{equation}\label{Eq:mod27}
(a\mmod{27},b\mmod{27})\in \{(2,15), (7,3), (11,6), (16,21),  (20,24), (25,12)\}.
\end{equation} With $p=3$,
\[b_1=2ab/3 \quad \mbox{and} \quad c_1=(a-a^3)/3\] as in condition \eqref{JKK:2} of Theorem \ref{Thm:JKK}, we have that $3\nmid b_1$ and a straightforward calculation reveals that $-c_1^3+ab_1^3\equiv 0 \pmod{3}$ for all possibilities in \eqref{Eq:mod27}. Thus, no additional monogenic polynomials arise from solutions to \eqref{Eq:delta/27} when $k\ge 3$, and the proof of item \eqref{Main I:3} is complete.

  For item \eqref{Main I:4}, we first note that $\FF_3$ is an infinite set. We define
  \[g_n(x):=x^3+(n^2+5)x^2+(n^2+2n+6)x+1,\] and we use Maple to calculate
  \begin{align}\label{Eq:Delgf3}
  \begin{split}
  \Delta(g_n)&=(n^2+n-1)^2(n^2+n+7)^2 \quad \mbox{and}\\
  \Delta(f_n)&=-2^6(n^2+n-1)^4(n^2+n+7)^4.
  \end{split}
  \end{align}
    Observe that $2\nmid \Delta(g_n)$.

We first show that $f_n(x)$ is irreducible over $\Q$. Since the equation
\[f_n(\pm 1)=2n^2+2n+13=0,\] has no integer solutions, it follows by the Rational Root theorem that $f_n(x)$ has no linear factors. Suppose that $f_n(x)$ has an irreducible quadratic factor $s(x)$. Then the remainder of $f_n(x)$ divided by $s(x)$ must be zero. For $s(x)=x^2+ax+1$, we use Maple to carry out this polynomial long division to find the remainder $R(x)=Ax+B$. Setting $A=0$ and $B=0$, and solving this system of equations using Maple, we get two solutions, both of which have $n=-1/2$, which is impossible. Similar obvious contradictions arise if $s(x)=x^2+ax-1$. Thus, $f_n(x)$ has no irreducible quadratic factor. Suppose then that $s(x)=x^3+ax^2+bx\pm 1$ is an irreducible factor of $f_n(x)$. In this case, $R(x)=Ax^2+Bx+C$, and we solve the system $\{A=0,B=0,C=0\}$ in Maple and get two solutions. In the first solution we have that $a$ must be a root of $w_1(z):=z^4+2z^3+3z^2-2z+1$, while in the second solution, $a$ must be a root of $w_2(z):=z^4-2z^3+3z^2+2z+1$. However, since $w_i(\pm 1)=5$, $w_i(z)$ has no integer roots by the Rational Root theorem. Thus, we conclude that $f_n(x)$ is irreducible over $\Q$.

Next, in the context of Lemma \ref{Lem:AJ}, with
\[a:=n^2+5, \ \ b:=n^2+2n+6, \ \ c:=1, \ \ g(x):=g_n(x) \quad \mbox{and} \quad f(x):=f_n(x),\] we have $-c=-1$ is not a square, $\Delta(g)$ is clearly a square by \eqref{Eq:Delgf3} and
\begin{align*}
h(x)&=x^6-(n^2+2n+6)x^4+(n^2+5)x^2-1\\
&=(x^3+(n+2)x^2+(n-1)x-1)(x^3-(n+2)x^2+(n-1)x+1).
\end{align*} Hence, it follows from item \eqref{C6} of Lemma \ref{Lem:AJ} that $\Gal(f_n)\simeq C_6$.

To address the monogenicity of $f_n(x)$, we note that, since $f_n(x)$ is irreducible and $f_n(x)=g_n(x^2)$, then $g_n(x)$ is irreducible and the monogenicity of $g_n(x)$ is a necessary condition for the monogenicity of $f_n(x)$. Consequently, we focus first on determining the values of $n$ for which $g_n(x)$ is not  monogenic. To accomplish this task, we let $p$ be a prime divisor of $\Delta(g_n)$ and we use Theorem \ref{Thm:Dedekind} with $T(x):=g_n(x)$. 
Let $K=\Q(\theta)$, where $T(\theta)=0$, and let $\Z_K$ denote the ring of integers of $T(x)$.

We show first that if $n\not \in N:=\{-2,-1,0,1\}$, then $g_n(x)$ is not monogenic. 
Note that $\abs{n^2+n-1}=1$ if and only if $n\in N$.
Suppose then that $n\not \in N$, and let $p$ be a prime divisor of $n^2+n-1$. Then, since $n^2\equiv -n+1 \pmod{p}$, straightforward calculations reveal that
\[\overline{T}(x)=x^3-(n-6)x^2+(n+7)x+1=(x-(3n-2))(x+(n+2))^3.\] Hence, we can let
\[h_1(x)=(x-(3n-2))(x+(n+2)) \quad \mbox{and} \quad h_2(x)=(x+(n+2))^2\]
in Theorem \ref{Thm:Dedekind}. With $F(x)$ as defined in Theorem \ref{Thm:Dedekind}, an easy calculation yields
\begin{align*}
F(x)&=\frac{(x-(3n-2))(x+(n+2))^3-(x^3+(n^2+5)x^2+(n^2+2x+6)x+1)}{p}\\
&=-\left(\frac{n^2+n-1}{p}\right)x^2-6\left(\frac{n^2+n-1}{p}\right)x-\frac{(3n+7)(n^2+n-1)}{p}.
\end{align*} Since $\gcd(\overline{h_1},\overline{h_2})=x+(n+2)$, to determine whether $\gcd(\overline{F},\overline{h_1},\overline{h_2})=1$, we only need to calculate $\overline{F}(-(n+2))$. Since
\[F(-(n+2))=-\frac{(n^2+n-1)^2}{p}\equiv 0 \pmod{p},\] we conclude that $\gcd(\overline{F},\overline{h_1},\overline{h_2})\ne 1$ and therefore, $g_n(x)$ is not monogenic. Consequently, it follows that $f_n(x)$ is not monogenic when $n\not \in N$.

When $n\in N$, we have that
 \begin{align}\label{Eq:fn}
 \begin{split}
  f_{-2}(x)&=x^6+9x^4+6x^2+1 \quad \mbox{with} \quad \Delta(f_{-2})=-2^63^8\\
  f_{-1}(x)&=x^6+6x^4+5x^2+1 \quad \mbox{with} \quad \Delta(f_{-1})=-2^67^4\\
  f_{0}(x)&=x^6+5x^4+6x^2+1 \quad \mbox{with} \quad \Delta(f_0)=-2^67^4\\
  f_1(x)&=x^6+6x^4+9x^2+1 \quad \mbox{with} \quad \Delta(f_1)=-2^63^8.
  \end{split}
 \end{align}
The quadrinomials $f_{-2}(x)$ and $f_{1}(x)$ in \eqref{Eq:fn} have been previously identified as monogenic in, respectively, $\QQ_1$ and $\QQ_2$. The quadrinomials $f_{-1}(x)$ and $f_0(x)$ in \eqref{Eq:fn} are new, and using Theorem \ref{Thm:Dedekind} (or simply Maple), it is easy to confirm that they are monogenic, which completes the proof of item \eqref{Main I:4}.

  Finally, for item \eqref{Main I:5}, we have that
  \[\QQ_1\cup \QQ_2\cup \QQ_3=\{P_1(x),P_2(x),P_3(x),P_4(x),P_5(x),P_6(x)\},\] where
 \begin{align}\label{Eq:4quads}
 \begin{split}
  P_1(x):&=x^6-6x^4+9x^2-3 \quad \mbox{with} \quad \Delta(P_1)=2^63^9,\\
  P_2(x):&=x^6+6x^4+9x^2+1 \quad \mbox{with} \quad \Delta(P_2)=-2^63^8,\\
  P_3(x):&=x^6-7x^4+14x^2-7 \quad \mbox{with} \quad \Delta(P_3)=-2^67^5,\\
  P_4(x):&=x^6+9x^4+6x^2+1 \quad \mbox{with} \quad \Delta(P_4)=-2^63^8,\\
  P_5(x):&=x^6+5x^4+6x^2+1 \quad \mbox{with} \quad \Delta(P_4)=-2^67^4,\\
  P_6(x):&=x^6+6x^4+5x^2+1 \quad \mbox{with} \quad \Delta(P_4)=-2^67^4.
  \end{split}
 \end{align}
   Since the quadrinomials $P_i(x)$ in \eqref{Eq:4quads} are all monogenic, it follows from Theorem \ref{Thm:mono} that they are distinct if and only if their polynomial discriminants are not equal. Item \eqref{Main I:5} then follows by inspection of \eqref{Eq:4quads}, which completes the proof of the theorem.
 \end{proof}
\begin{rem}
  Computer evidence suggests that the four quadrinomials given in item \eqref{Main I:5} of Theorem \ref{Thm:Main} are the only distinct monogenic even cyclic sextic polynomials.
\end{rem}

\section{Some noncyclic monogenic even sextic quadrinomials}\label{Sec:noncyclic}
In this section, we show that the situation when the Galois group of an even sextic quadrinomial is not $C_6$ can be quite different from Theorem \ref{Thm:Main}. 
In particular, we prove the following:
\begin{thm}\label{Thm:A4}
  For any $n\in \Z$, define
\[f_n(x):=x^6+(3n+4)x^4+(3n+1)x^2-1\quad \mbox{and}\quad \D:=9n^2+15n+13.\] Then
\begin{enumerate}
  \item \label{A4 I:1} $f_n(x)$ is irreducible over $\Q$ and $\Gal(f_n)\simeq A_4$,
  \item \label{A4 I:2} $f_n(x)$ is monogenic if and only if $\D$ is squarefree,
  \item \label{A4 I:3} the set
  \[\FF:=\{f_n(x): \mbox{$\D$ is squarefree}\}.\] is an infinite collection of distinct monogenic $A_4$ even sextic quadrinomials. Here the term ``distinct" means that no two quadrinomials in $\FF$ generate the same sextic field.
\end{enumerate}
\end{thm}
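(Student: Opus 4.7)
The strategy is to work throughout with the substitution $a=3n+4$, $b=3n+1$, $c=-1$ in the notation of Lemma \ref{Lem:AJ} and the cubic $g_n(x):=x^3+ax^2+bx+c$, which satisfies $f_n(x)=g_n(x^2)$. For item \eqref{A4 I:1}, a direct cubic-discriminant computation yields $\Delta(g_n)=\D^2$, and combined with the standard identity $\Delta(f)=-64c\,\Delta(g)^2$ that holds whenever $f(x)=g(x^2)$, this gives $\Delta(f_n)=2^6\D^4$. Irreducibility of $f_n(x)$ over $\Q$ follows by the same method used for item \eqref{Main I:4} of Theorem \ref{Thm:Main}: rational roots are excluded by evaluating $f_n(\pm 1)$, while putative quadratic and cubic factorizations are eliminated by symbolic polynomial long division whose resulting coefficient system forces $n$ out of $\Z$. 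To apply item \eqref{A4} of Lemma \ref{Lem:AJ}, observe that $-c=1$ is a square, $\Delta(g_n)=\D^2$ is a square, and $h(x)=x^6-(3n+1)x^4-(3n+4)x^2-1=h_g(x^2)$ where $h_g(x):=x^3-(3n+1)x^2-(3n+4)x-1$; since $h_g(\pm 1)\in\{-6n-5,\,1\}$ is nonzero, $h_g$ is irreducible, and the same long-division technique rules out a factorization of $h(x)$ into two cubics, giving $\Gal(f_n)\simeq A_4$.

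For item \eqref{A4 I:2}, the equality $\Delta(f_n)=2^6\D^4$ restricts the possible prime divisors of the index $[\Z_K:\Z[\theta]]$ to $p=2$ and the prime divisors of $\D$. Easy checks yield $\D\equiv 1\pmod 2$ and $\D\equiv 1\pmod 3$, so every prime $p\mid\D$ satisfies $p\geq 5$. For $p=2$, Dedekind's Criterion (Theorem \ref{Thm:Dedekind}) applies, noting that $f_n$ reduces modulo $2$ to $(x^3+x+1)^2$ when $n$ is even and $(x^3+x^2+1)^2$ when $n$ is odd; a short computation of the auxiliary polynomial $F(x)=(h_1h_2-f_n)/2$, followed by reduction modulo the relevant irreducible cubic in $\F_2[x]$, verifies $\gcd(\overline F,\overline{h_1},\overline{h_2})=1$ in each of the four residue classes $n\bmod 4$, so $2$ never divides the index. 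For an odd prime $p\mid\D$, the relation $\Delta(g_n)=\D^2$ forces $g_n(x)$ to have a repeated root modulo $p$, whence $f_n(x)=g_n(x^2)$ acquires a corresponding squared factor modulo $p$; a direct computation of the Dedekind auxiliary polynomial, analyzed according to the quadratic-residue status of the repeated root of $g_n$ modulo $p$, shows $\gcd(\overline F,\overline{h_1},\overline{h_2})=1$ if and only if $p^2\nmid\D$. Combining these proves that $f_n(x)$ is monogenic if and only if $\D$ is squarefree.

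For item \eqref{A4 I:3}, the set $\FF$ is infinite because the integer quadratic $9n^2+15n+13$ has no fixed square divisor (for instance $\D(0)=13$ is prime), so by Estermann's classical theorem on squarefree values of binary quadratic polynomials it takes squarefree values on a positive proportion of $n\in\Z$. For distinctness, the identity $\D(n)-\D(m)=(n-m)\bigl(9(n+m)+15\bigr)$ shows that $\D(n)=\D(m)$ forces $n=m$, since $9(n+m)+15=0$ has no integer solution. The quadratic $9x^2+15x+13$ has negative discriminant $-243$, so $\D(n)>0$ for all $n\in\Z$, and the values $\D(n)$ are therefore pairwise distinct positive integers. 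Monogenicity then gives $\Delta(K_n)=\Delta(f_n)=2^6\D(n)^4$, so distinct $n$ in $\FF$ yield sextic fields with distinct discriminants and hence non-isomorphic sextic number fields.

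The main technical obstacle is the Dedekind analysis for odd primes $p\mid\D$, since one must track precisely how the factorization of $f_n(x)\pmod p$ degenerates depending on whether $p$ divides $\D$ to first or higher order, and confirm that the gcd computation in $\F_p[x]$ cleanly separates these two cases. The $p=2$ analysis, while more elementary, similarly requires a case split in $n\bmod 4$ since the auxiliary polynomial $F(x)$ produced differs across the residue classes, and each subcase must be individually verified to satisfy the criterion.
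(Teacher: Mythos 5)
Your proposal is correct and follows essentially the same route as the paper: irreducibility by excluding low-degree factors, Lemma \ref{Lem:AJ}\eqref{A4} for the $A_4$ group (with $h(x)$ handled through the irreducible cubic $h_g$), Dedekind's criterion at $p=2$ and at the primes $p\mid\D$, a classical squarefree-values theorem for the infinitude of $\FF$ (the paper cites Nagel rather than Estermann), and comparison of field discriminants via monogenicity for distinctness. The one computation you defer, the gcd condition at an odd prime $p\mid\D$, is exactly the paper's evaluation of the Dedekind polynomial at the double roots $\pm r\equiv \pm(3n+1)/3 \pmod{p}$, namely $\overline{F}(r)=-\overline{\left(\frac{\D}{p}\right)\left(\frac{6n+5}{3^3}\right)}$, which is nonzero precisely when $p^2\nmid \D$ since $p\nmid 6n+5$.
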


\begin{proof}
  We begin by proving that $f_n(x)$ is irreducible over $\Q$. Since
  \[f_n(\pm1)=6n+5\ne 0,\] we have by the Rational Root theorem that $f_n(x)$ has no linear factors. If $f_n(x)$ has an irreducible quadratic factor $s(x)$, then $s(x)=x^2+ax \pm 1$. Suppose that $s(x)=x^2+ax+1$ is such a factor of $f_n(x)$. Then, using Maple to perform the long polynomial division of $f_n(x)$ divided by $s(x)$, we see that the remainder is
  \[R(x)=(3an+4a-3a^3n-a^5)x+1-a^4-a^2-3a^2n.\] Since $R(x)=0$, we use Maple to solve the system of equations
  \[\{3an+4a-3a^3n-a^5=0,\ 1-a^4-a^2-3a^2n=0\}. \] The solution that Maple gives has $n=-5/6$, which is impossible. The same contradiction is reached if $s(x)=x^2+ax-1$. Suppose then that $f_n(x)$ has an irreducible cubic factor $s(x)=x^3+ax^2+bx+1$. Proceeding as before, we use Maple to solve a system of three equations generated by setting the coefficients of the remainder equal to zero. Maple gives two solutions. In the first solution, we have that $b$ is a root of the polynomial $u(z)=z^2-2z+3$. However, $\Delta(u)=-8$ which implies that $u(z)$ has no integer roots. In the second solution provided by Maple, we have that $a$ is a root of $w(z)=z^2-2z+b^2-2b+3$. Since $\Delta(w)=-4((b-1)^2+1)<0$, it follows that $w(z)$ has no integer roots. Hence, this scenario is impossible. The same contradictions arise if $s(x)=x^3+ax^2+bx-1$. Thus, we conclude that $f_n(x)$ is irreducible over $\Q$.

  We note, in the context of Lemma \ref{Lem:AJ}, that
  \[a:=3n+4, \quad b:=3n+1 \quad \mbox{and} \quad c:=-1,\] so that
  \[g(x):=x^3+(3n+4)x^2+(3n+1)x-1 \quad \mbox{and} \quad h(x):=x^6-(3n+1)x^4-(3n+4)x^2-1.\] Clearly, $-c=1$ and $\Delta(g)=\D^2$ are squares. Since the same techniques used to show that $f_n(x)$ is irreducible can be used to confirm that $h(x)$ is irreducible over $\Q$, we omit the details. Thus, it follows from item \eqref{A4} of Lemma \ref{Lem:AJ} that $\Gal(f_n)\simeq A_4$, which completes the proof of item \eqref{A4 I:1}.

  For item \eqref{A4 I:2}, an easy calculation in Maple gives $\Delta(f_n)=2^6\D^4$. Let $K=\Q(\theta)$, where $f_n(\theta)=0$, and let $\Z_K$ denote the ring of integers of $K$. We let $p$ be a prime divisor of $\Delta(f_n)$, and we use Theorem \ref{Thm:Dedekind} with $T(x):=f_n(x)$ to determine exactly when $p$ does, or does not, divide the index $[\Z_K:\Z[\theta]]$, which will provide necessary and sufficient conditions for the monogenicity of $f_n(x)$.

  Suppose first that $p=2$. Then
  \[\overline{T}(x)=\left\{\begin{array}{cl}
    (x^3+x+1)^2 & \mbox{if $2\mid n$}\\
    (x^3+x^2+1)^2 & \mbox{if $2\nmid n$}.
  \end{array}\right.\]
   When $2\mid n$, we can let $h_1(x)=h_2(x)=x^3+x+1$ in Theorem \ref{Thm:Dedekind}. Thus,
  \begin{align*}
  F(x)&=-\left(\frac{3n+2}{2}\right)x^4+x^3-\left(\frac{3n}{2}\right)x^2+x+1\\
  &\equiv \left\{\begin{array}{cl}
    (x+1)^2(x^2+x+1) \pmod{2} & \mbox{if $n\equiv 0 \pmod{4}$}\\
    (x+1)^3 \pmod{2} & \mbox{if $n\equiv 2 \pmod{4}$}.
  \end{array}\right.
  \end{align*} Hence, $\gcd(\overline{F},\overline{h_1},\overline{h_2})=1$ in this case.
  When $2\nmid n$, we can let $h_1(x)=h_2(x)=x^3+x^2+1$ in Theorem \ref{Thm:Dedekind}. Thus,
  \begin{align*}
  F(x)&=x^5-\left(\frac{3n+3}{2}\right)x^4+x^3-\left(\frac{3n-1}{2}\right)x^2+1\\
  &\equiv \left\{\begin{array}{cl}
    x^5+x^4+x^3+x^2+1 \pmod{2} & \mbox{if $n\equiv 1 \pmod{4}$}\\
    x^5+x^3+1 \pmod{2} & \mbox{if $n\equiv 3 \pmod{4}$}.
  \end{array}\right.
  \end{align*}
   Hence, $\gcd(\overline{F},\overline{h_1},\overline{h_2})=1$ in this case as well, and we conclude that $2\nmid [\Z_K:\Z[\theta]]$.

  Now suppose that $p\mid \D$. Then $p\not \in \{2,3\}$ and since $9n^2+15n+13\equiv 0 \pmod{p}$, we have
  \[n\equiv \frac{-5\pm 3\sqrt{-3}}{6} \pmod{p}.\] Suppose that $n\equiv (-5+3\sqrt{-3})/6 \pmod{p}$. Then $\sqrt{-3}\equiv (6n+5)/3 \not \equiv 0 \pmod{p}$ and
  \begin{align*}
  \overline{T}(x)&=\left(x-\frac{-1+\sqrt{-3}}{2}\right)^3\left(x+\frac{-1+\sqrt{-3}}{2}\right)^3\\
  &=(x-r)^3(x+r)^3,
    \end{align*}  where $r\in \Z$ with $r\equiv (3n+1)/3 \pmod{p}$. Thus, we can let
    \[h_1(x)=(x-r)(x+r) \quad \mbox{and}\quad  h_2(x)=(x-r)^2(x+r)^2\]
  in Theorem \ref{Thm:Dedekind}. Then
  \begin{multline*}
    \overline{F}(x)=-\overline{\left(\frac{\D}{3p}\right)}x^4+\overline{\left(\frac{(3n-2)(3n+1)\D}{3^3p}\right)}x^2\\
    -\overline{\left(\frac{(3n-2)(3n+4)(9n^2-3n+7)\D}{3^6p}\right)}
  \end{multline*}
  so that
  \[\overline{F}(r)=\overline{F}((3n+1)/3)=-\overline{\left(\frac{\D}{p}\right)\left(\frac{6n+5}{3^3}\right)}.\] Since $6n+5\not \equiv 0 \pmod{p}$, it follows that $f_n(x)$ is monogenic if and only if $p^2$ does not divide $\D$ for all prime divisors $p$ of $\D$, which is true if and only if $\D$ is suqarefree. Since the argument is the same if $n\equiv (-5-3\sqrt{-3})/6 \pmod{p}$, we omit the details in that case, and the proof of item \eqref{A4 I:2} is complete.

Finally, for item \eqref{A4 I:3}, since from \cite{Nagel}, we have that $9n^2+15n+13$ is squarefree for infinitely many integers $n$, we conclude that the set  $\FF$ is infinite. To see that all quadrinomials in $\FF$ are distinct, we assume, by way of contradiction, that $f_{n_1}(x), f_{n_2}(x)\in \FF$ are not distinct, where $n_1\ne n_2$. Thus, $K_1=K_2$, where $K_i=\Q(\theta_i)$ and $f_{n_i}(\theta_i)=0$.
 Since each $f_{n_i}(x)$ is monogenic, it follows from Theorem \ref{Thm:mono} that $\Delta(f_{n_1})=\Delta(f_{n_2})$. Thus, we arrive at the possible equations
\begin{equation}\label{Eq:Equaldisc}
9n_1^2+15n_1+13=\pm(9n_2^2+15n_2+13).
\end{equation}
The plus sign in \eqref{Eq:Equaldisc} yields the contradiction $n_1+n_2=-5/3$ if $n_1\ne n_2$, while the negative sign requires that
$-36(n_2+5/6)^2-54$ be a square, which is also impossible. Hence, the proof of the theorem is complete.
 \end{proof}

\begin{rems}
  The fact that $\Gal(f_n)\simeq A_4$ in item \eqref{A4 I:1} of Theorem \ref{Thm:A4} also follows from \cite[Theorem 1.4]{IJ} and \cite[Proposition 4.4]{HJMS}. As far as the author can determine, the explicit family $\FF$ in Theorem \ref{Thm:A4} is new and does not appear in the current literature.
\end{rems}

%\section*{Acknowledgments} %The author thanks the referee for the valuable suggestions.
%\section{Final Remarks}

% For alignments use AmS-LaTeX constructions not \eqnarray.

%% - theorems and proofs
%\begin{thm}[optional text]
% The optional material will be typeset as part of the theorem heading
%\end{thm}

%\begin{proof}[Optional proof heading]
% the proof
%\end{proof}
% An end-of-proof symbol (open box) will be typeset at the
% end of the proof.

%\ack % or \acks
% Put acknowledgements here
%The authors thank the anonymous referee for the suggestions that helped to improve the paper. %the helpful comments.

% alteratively, bibliographies prepared with BibTeX can be included by
% means of the following commands
%\bibliographystyle{srtnumbered}
%\bibliography{mybib}

\end{document}